\newtheorem{defi}{Definition}[section]
\newtheorem{theorem}{Theorem}[section]
\newtheorem{lemma}{Lemma}[section]
\newtheorem{coro}{Corollary}[section]
\newtheorem{remark}{Remark}[section]
\numberwithin{equation}{section}
\newtheorem*{theorem*}{Theorem}
\def\O{\Omega}
\def\o{\omega}
\def\R{{\mathbb R}}
\def\g{\gamma}
\def\Xint#1{\mathchoice
  {\XXint\displaystyle\textstyle{#1}}%
  {\XXint\textstyle\scriptstyle{#1}}%
  {\XXint\scriptstyle\scriptscriptstyle{#1}}%
  {\XXint\scriptscriptstyle\scriptscriptstyle{#1}}%
  \!\int}
\def\XXint#1#2#3{{\setbox0=\hbox{$#1{#2#3}{\int}$}
    \vcenter{\hbox{$#2#3$}}\kern-.5\wd0}}
\def\avgint{\Xint-}
\newcommand{\vertiii}[1]{{\left\vert\kern-0.25ex\left\vert\kern-0.25ex\left\vert #1 
    \right\vert\kern-0.25ex\right\vert\kern-0.25ex\right\vert}}
\begin{document}

\subjclass[2020]{Primary: 46E35; Secondary: 46B70, 26D10.}

\title[An interpolation result for $A_1$ weights]{An interpolation result for $A_1$ weights with applications to fractional Poincar\'e inequalities}

\author{Irene Drelichman}
\address{CMaLP, Departamento de Matem\'atica, Facultad de Ciencias Exactas, Universidad Nacional de La Plata, Argentina}
\email{irene@drelichman.com}

\thanks{Supported by FONCYT under grant PICT-2018-03017, and by Universidad de Buenos Aires under grant 20020190100273BA. The author is a member of CONICET, Argentina.}

\begin{abstract}
We characterize the real interpolation space between weighted $L^1$ and $W^{1,1}$  spaces on arbitrary  domains different from $\R^n$, when the weights are positive powers of the distance to the boundary multiplied by an $A_1$ weight. As an  application of this result we obtain weighted fractional Poincar\'e inequalities with sharp dependence on the fractional parameter $s$ (for $s$ close to 1) and show that they are equivalent to a weighted Poincar\'e inequality for the gradient.
\end{abstract}

\keywords{Fractional Sobolev spaces, Gagliardo seminorm, irregular domains, interpolation spaces, Muckenhoupt weights.}

\maketitle

\section{Introduction}
Given a domain  $\Omega \subset\R^n, \O \neq \R^n$, we denote by $d(x)=d(x, \partial \O)$ the distance from $x$ to the boundary. Let $\alpha, \beta \ge 0$, and $\o$ be a weight in  Muckenhoupt's class $A_1$, that is, such that  $M\omega(x) \le C \omega (x)$ a.e., where $M$ is the Hardy-Littlewood maximal function. We consider the weighted Sobolev space
$$
W^{1,1}_\o(\O, d^\alpha, d^\beta)=\{f \in L^1_\o(\O,  d^\alpha) : \|\nabla f\|_{L^1_\o(\O, d^\beta)} < \infty \}
$$
where $\|f\|_{L^1_\o(\O, d^\beta)}=\|f  \o d^\beta\|_{L^1(\O)}$.

The first goal of this paper is to  show that, for any such domain, and any $\alpha\ge 0$, one has 
\begin{equation}
\label{interpolado}
(L^1_\o(\O, d^\alpha), W^{1,1}_{\o} (\O, d^\alpha, d^{\alpha+1}))_{s,1}= \widetilde W^{s,1}_{\o }(\O, d^\alpha, d^{\alpha+s})
\end{equation}
 with equivalence of norms, where 
 $$
 \widetilde W^{s,1}_{\o}(\O, d^\alpha, d^\beta) = \{ f\in L^1_\o(\O, d^\alpha) : |f|_{\widetilde W^{s,1}_{\o}(\O, d^\beta)} <\infty \}
$$
and
$$
|f|_{\widetilde W^{s,1}_{\o}(\O, d^\beta)} =  \int_\O \int_{|x-y|<\tau d(x)} \frac{|f(x)-f(y)|}{|x-y|^{n+s}} \, dy \, d(x)^\beta \o(x) \, dx.
$$
 
This result generalizes the one  in  \cite[Theorem 1.1]{ADD}, which corresponds to the case   $\o\equiv 1$ (notice that that result is written for bounded domains, but the same arguments apply as long as $\O\neq \R^n$). The proof of the embedding  $\widetilde W^{s,1}_{\omega}(\Omega, d^\alpha, d^{\alpha+s}) \subseteq (L^1_\omega(\Omega, d^\alpha), W^{1,1}_{\omega}(\Omega, d^\alpha, d^{\alpha+1}))_{s,1} $  follows closely the one in that paper, modifying it to include the $A_1$ weight. But, because of the presence of the weight,  the  opposite embedding requires a completely different proof. We borrow  some ideas from \cite{GKS}, but we adapt them to our seminorm and to the presence of different powers of the distance to the boundary. We remark that,  among other differences, in  \cite{GKS}  both the function and its (generalized) gradient belong to the same weighted space, which is not our case.

The characterization in \eqref{interpolado} is strongly related to the obtention of fractional Poincar\'e inequalities with sharp dependence on the fractional parameter $s$, for $s$ close to 1. 

Recall that,  for a cube $Q$, $1\le p<\frac{1}{s}$, and $\frac12 \le s<1$, it was proved in \cite[Theorem 1]{BBM2} that 
\begin{equation}
\label{fraccionaria-BBM}
\|f - f_Q\|^p_{L^p(Q)} \lesssim \frac{(1-s)}{(n-sp)^{p-1}} \int_Q \int_Q \frac{|f(x)-f(y)|^p}{|x-y|^{n+sp}} \, dx \, dy,
\end{equation}
where $f_Q$ stands for the integral average of $f$ over $Q$. Here, the implicit constant depends on the side-length of $Q$ but, in what follows, we will not be interested in such dependence. Other proofs and extensions of this inequality can be found in \cite{HMPV, MPW, MS, P}.

For irregular domains, a more suitable fractional norm was introduced in \cite{HV}, and it was shown that for any bounded John domain $\O\subset \R^n$ (see definition below)  and any fixed constant $0<\tau<1$,
\begin{equation}
\label{improved-fraccionaria}
\|f - f_\O\|_{L^p(\O)}^p \lesssim   \int_\O \int_{|x-y|<\tau d(x)} \frac{|f(x)-f(y)|^p}{|x-y|^{n+sp}} \, dy \, dx.
\end{equation}
Generalizations of this result can be found in \cite{DD-fracpoincare, DIV, G, LGO}, but it should be noted that the scaling factor $(1-s)$ in the right-hand side of \eqref{fraccionaria-BBM} cannot be obtained with any of those proofs.  This turns out to be a drawback, since this factor plays a key role in the limiting behavior of the seminorm when $s\to 1^-$, and it relates fractional and classical Poincar\'e inequalities. Indeed, it was proved in \cite{BBM}  (see also \cite{M}) that, for a bounded extension domain  $\O$,  $1\le p<\infty$,
 and $f\in W^{1,p}(\O)$,
$$
\lim_{s\to 1^-} (1-s) \int_\O \int_\O \frac{|f(x)-f(y)|^p}{|x-y|^{n+sp}} \, dx \, dy = K_{n,p} \|\nabla f\|_{L^p(\O)}
$$
where $K_{n,p}$ is an explicit constant, so that one can recover from \eqref{fraccionaria-BBM} the classical Poincar\'e inequality for the gradient in $Q$.

For arbitrary bounded domains, the analogous result holds using the restricted fractional seminorm. Namely, it was proved in \cite{DD-BBM} that, for $f\in W^{1,p}(\O)$, $1< p<\infty$,
$$
\lim_{s\to 1^-} (1-s)  \int_\O \int_{|x-y|<\tau d(x)} \frac{|f(x)-f(y)|^p}{|x-y|^{n+sp}} \, dy \, dx = K_{n,p} \|\nabla f\|_{L^p(\O)},
$$
and this result was extended to $p=1$ in \cite{Mo}. This suggests that \eqref{improved-fraccionaria} should also hold with the $(1-s)$ factor. The second goal of this paper is to show that this is indeed the case when $p=1$, in the more general weighted setting. More precisely, we prove  that for bounded John domains one has
$$
\inf_{c\in\R} \| f-c\|_{L^1_\o(\O, d^\alpha)} \lesssim  \frac{(1-s)}{s(n+s)}  \int_\O \int_{|x-y|<\tau d(x)} \frac{|f(x)-f(y)|}{|x-y|^{n+s}} \, dy \, d(x)^{\alpha+s} \o(x) \, dx
$$
whenever $\o  \in A_1$. This is done by showing that this inequality is equivalent to a weighted Poincar\'e inequality for the gradient, which is known.  The proof of this equivalence uses some ideas from Oscar Dom\'inguez Bonilla, which relate bounds for the $K$-functional corresponding to \eqref{interpolado} to the obtention of sharp inequalities,  so the author would like to thank him for generously sharing them. It is worth noting that $K$-functionals have also been recently used in a different way to derive self-improving type inequalities of several classical inequalities in \cite{DLTYY}. 

Finally, the author also wishes to thank  the anonymous referee for carefully reading the manuscript and giving many valuable suggestions.

\section{Notation and Preliminary Results}

As usual, we will write $A\lesssim B$ to mean $A\le CX$ whenever $C$ is a positive constant independent of relevant quantities. Throughout this paper we shall only keep track of the dependence of the constants with respect to the interpolation parameter $s$, that we will use later in our arguments. 

Let $L(\O)$ denote the collection of measurable functions $f: \O \to \mathbb{R}$.  In what follows, we will consider the following weighted Lebesgue and Sobolev spaces
$$
L^1_\omega(\O, d^\alpha) = \{ f \in L(\O) : \|f\|_{L^1_\omega(\O, d^\alpha)}= \|f \o d^\alpha\|_{L^1(\O)} < \infty\}
$$
$$
W^{1,1}_\omega(\O, d^\alpha, d^{\alpha+1}) = \{ f \in L^1_\omega(\O, d^\alpha) : \|\nabla f\|_{L^1_\o(\O, d^{\alpha+1})} < \infty\}
$$
and their fractional counterparts
$$
 W^{s,1}_\omega(\O, d^\alpha, d^{\alpha+s}) = \{ f \in L^1_\omega(\O, d^\alpha) : |f|_{ W^{s,1}_{\o}(\O, d^{\alpha+s})} < \infty\}
$$
$$
\widetilde W^{s,1}_\omega(\O, d^\alpha, d^{\alpha+s}) = \{ f \in L^1_\omega(\O, d^\alpha) : |f|_{\widetilde W^{s,1}_{\o}(\O, d^{\alpha+s})} < \infty\}
$$
where
$$
|f|_{ W^{s,1}_{\o}(\O, d^{\alpha+s})} =  \int_\O \int_\O \frac{|f(x)-f(y)|}{|x-y|^{n+s}} \, dy \, d(x)^{\alpha+s} \o(x) \, dx,
$$
and
$$
|f|_{\widetilde W^{s,1}_{\o}(\O, d^{\alpha+s})} =  \int_\O \int_{|x-y|<\tau d(x)} \frac{|f(x)-f(y)|}{|x-y|^{n+s}} \, dy \, d(x)^{\alpha+s} \o(x) \, dx.
$$

By definition, for $0<s<1$, the real interpolation space between  $L^1_\o(\O, d^\alpha)$ and  $W^{1,1}_{\o} (\O, d^\alpha, d^{\alpha+1})$ is given by

$$
(L^1_\o(\O, d^{\alpha}), W^{1,1}_\o(\Omega, d^{\alpha }, d^{\alpha+1}))_{s,1} = \{ f\in L^1_\o(\Omega, d^{\alpha }) : \|f\|_{(L^1_\o(\Omega, d^{\alpha }), W^{1,1}_\o(\Omega, d^{\alpha }, d^{\alpha+1}))_{s,1}} <\infty\}
$$
with 
\begin{equation}\label{K}
\|f\|_{(L^1_\o(\Omega, d^{\alpha }), W^{1,1}_\o(\Omega, d^{\alpha }, d^{\alpha+1}))_{s,1}} = \int_0^\infty \lambda^{-s} K(f, \lambda) \,  \frac{d\lambda}{\lambda} 
\end{equation}
and  
\begin{equation}\label{K2}
K(f, \lambda)= \inf\{ \|g\|_{L^1_\o(\O, d^\alpha)}+ \lambda \|h\|_{W^{1,1}_\o (\O, d^\alpha, d^{\alpha+1})} : f= g+h , g\in L^1_\o(\O, d^\alpha), h \in W^{1,1}_\o(\O, d^\alpha, d^{\alpha+1})\}.
\end{equation}

As announced, we will obtain the characterization 
$$
(L^1_\o(\O, d^{\alpha}), W^{1,1}_\o(\Omega, d^{\alpha }, d^{\alpha+1}))_{s,1} =\widetilde W^{s,1}_\o(\O, d^{\alpha}, d^{\alpha+s})
$$
with equivalence of norms. The norm of the latter space is also equivalent to that of  
$$W^{s,1}_\o (\O, d^\alpha, \delta^{\alpha+s})= \{ f\in L^1_\o(\O, d^\alpha) : |f|_{ W^{s,1}_{\o}(\O, \delta^{\alpha+s})} <\infty \}$$
where $\delta(x,y)=\min\{d(x), d(y)\}$ and
$$
 |f|_{ W^{s,1}_{\o}(\O, \delta^{\alpha+s})} = \int_\O \int_\O \frac{|f(x)-f(y)|}{|x-y|^{n+s}} \, \delta(x,y)^{\alpha+s} \, dy \, \o(x) \, dx.
$$

The proof of this result is contained in the following lemma. Observe that it implies, in particular, that the norms of the spaces $\widetilde W^{s,1}_\o(\O, d^{\alpha}, d^{\alpha+s})$ for different values of $0<\tau<1$ are all equivalent. 

\begin{lemma}\label{equiv-normas}
Let $\O$ be a  domain, $\O\neq\R^n$, $0<s<1$, and $\alpha \ge 0$. Then,
$$
\widetilde W^{s,1}_\o(\O, d^{\alpha}, d^{\alpha+s}) = W^{s,1}_\o (\O, d^\alpha, \delta^{\alpha+s})
$$
with equivalent norms.  
\end{lemma}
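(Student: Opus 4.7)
The plan is to establish the two-sided estimate
\[
|f|_{\widetilde W^{s,1}_\o(\O,d^{\alpha+s})}\;\lesssim\; |f|_{W^{s,1}_\o(\O,\delta^{\alpha+s})}\;\lesssim\; |f|_{\widetilde W^{s,1}_\o(\O,d^{\alpha+s})}+\|f\|_{L^1_\o(\O,d^\alpha)},
\]
which gives the equivalence of the full norms of the two spaces since both contain the common $L^1_\o(\O,d^\alpha)$-component. The first inequality is a direct consequence of the elementary geometric observation that whenever $|x-y|<\tau d(x)$ one has $d(y)\ge d(x)-|x-y|\ge(1-\tau)d(x)$, and hence $\delta(x,y)\ge(1-\tau)d(x)$. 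Replacing $d(x)^{\alpha+s}$ by $(1-\tau)^{-(\alpha+s)}\delta(x,y)^{\alpha+s}$ and then enlarging the $y$-integration from $B(x,\tau d(x))\cap\O$ to all of $\O$ yields the bound.

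For the reverse inequality I would split $\O\times\O$ according to whether $|x-y|<\tau d(x)$ (the near region) or $|x-y|\ge\tau d(x)$ (the far region). On the near region the trivial bound $\delta(x,y)\le d(x)$ gives direct control by $|f|_{\widetilde W^{s,1}_\o(\O,d^{\alpha+s})}$. On the far region I would use the crude splitting $|f(x)-f(y)|\le|f(x)|+|f(y)|$ and estimate each piece separately. The $|f(x)|$-piece is easy: using $\delta(x,y)\le d(x)$ together with the elementary computation $\int_{|x-y|\ge\tau d(x)}|x-y|^{-(n+s)}\,dy\lesssim s^{-1}(\tau d(x))^{-s}$ produces a factor $d(x)^{\alpha}\o(x)|f(x)|$, whose integral is a constant multiple of $\|f\|_{L^1_\o(\O,d^\alpha)}$.

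The main obstacle is the $|f(y)|$-piece, which after Fubini and the bound $\delta(x,y)\le d(y)$ reduces to estimating
\[
T(y):=\int_{\{x\in\O\,:\,|x-y|\ge\tau d(x)\}}\frac{\o(x)}{|x-y|^{n+s}}\,dx.
\]
Here the key geometric remark is that on the integration set one has $d(y)\le d(x)+|x-y|\le|x-y|(1+\tau)/\tau$, whence $|x-y|\ge\tau' d(y)$ with $\tau':=\tau/(1+\tau)$. Thus $T(y)$ is controlled by the integral of $\o(x)|x-y|^{-(n+s)}$ over $\{|x-y|\ge\tau' d(y)\}$, which I would handle by a dyadic decomposition into annuli of radii $R_k=2^k\tau'd(y)$, $k\ge 0$. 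On each annulus the $A_1$ condition yields $\int_{B(y,2R_k)}\o\le C R_k^n\,\o(y)$, so that the contribution of the $k$-th annulus is bounded by $C\,\o(y)R_k^{-s}$ and the resulting geometric series sums to $T(y)\lesssim\o(y)d(y)^{-s}$. Multiplying by the extracted factor $d(y)^{\alpha+s}|f(y)|$ and integrating in $y$ finally produces the bound by a constant times $\|f\|_{L^1_\o(\O,d^\alpha)}$, completing the proof.
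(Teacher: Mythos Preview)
Your proof is correct and follows essentially the same approach as the paper's: the same near/far splitting, the same geometric observation $|x-y|\ge\tau d(x)\Rightarrow |x-y|\ge\frac{\tau}{1+\tau}d(y)$, and the same use of the $A_1$ condition to control the $|f(y)|$-piece. The only cosmetic difference is that where you spell out the dyadic-annuli estimate for $T(y)$ directly, the paper invokes Hedberg's lemma to obtain $T(y)\lesssim d(y)^{-s}M\omega(y)$ and then applies $M\omega\lesssim\omega$.
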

\begin{proof}
Fix   $0<\tau < 1$. Observe that, whenever $|x-y|<\tau d(x)$,  one has  $d(x)\sim d(y)$ and, therefore, 

$$
\int_\Omega \int_{|x-y|<\tau d(x)} \frac{|f(x)-f(y)|}{|x-y|^{n+s}} \, dy \, d(x)^{\alpha+s} \omega(x) \, dx \lesssim  \int_\Omega \int_\Omega \frac{|f(x)-f(y)|}{|x-y|^{n+s}} \delta(x,y)^{\alpha+s} \, dy \,  \omega(x) \, dx.
$$

For the other inequality,  we have
\begin{align*} 
\int_\Omega \int_{|x-y|\ge \tau d(x)} \frac{|f(x)-f(y)|}{|x-y|^{n+s}} \delta(x,y)^{\alpha+s} \, dy \,  \omega(x) \, dx &\lesssim \int_\Omega \int_{|x-y|\ge \tau d(x)} \frac{|f(x)|+|f(y)|}{|x-y|^{n+s}}  \delta(x,y)^{\alpha+s}  \, dy \, \omega(x) \, dx.
\end{align*}

Now,
\begin{align*} 
 \int_\Omega \int_{|x-y|\ge \tau d(x)} \frac{|f(x)|}{|x-y|^{n+s}} \delta(x,y)^{\alpha+s}  \, dy \, \omega(x) \, dx &\lesssim  \int_\Omega \left(\int_{|x-y|\ge\tau d(x)} \frac{1}{|x-y|^{n+s}} \, dy \right) \, |f(x)| d(x)^{\alpha+s} \omega(x) \, dx\\
& \lesssim \|f\|_{L^1_{\o}(\Omega, d^\alpha)}.
 \end{align*}

And, since $|x-y|\ge \tau d(x) \Rightarrow d(y)\le |x-y|+d(x)\le (1+\frac{1}{\tau})  |x-y|$, by Fubini and \cite[Lemma (b)]{He}
\begin{align*} 
 \int_\Omega \int_{|x-y|\ge \tau d(x)} \frac{|f(y)|}{|x-y|^{n+s}} \delta(x,y)^{\alpha+s}  \, dy \, \omega(x) \, dx &\lesssim  \int_\Omega \left(\int_{|x-y|\ge\frac{\tau}{1+\tau}d(y)} \frac{\omega(x)}{|x-y|^{n+s}} \, dx \right) \, |f(y)| d(y)^{\alpha+s} \, dy\\
 &\lesssim  \int_\Omega M\omega(y) \, |f(y)| \,  d(y)^\alpha \, dy\\
& \lesssim \|f\|_{L^1_{\o}(\Omega, d^\alpha)}.
 \end{align*}
\end{proof}

Although our interpolation result holds for arbitrary  domains, we will then apply it to domains where a certain weighted  Poincar\'e inequality holds (see Theorem \ref{poincare}). A full characterization of domains supporting such  inequalities is still missing, but they are known to hold in bounded John domains. Moreover,  under  the additional assumption of a \emph{separation property}, this is exactly  the larger class where they hold (see \cite[Theorem 2.1]{JKK}), so we recall their definition below.
\begin{defi}
A bounded domain $\O\subset\R^n$ is a John domain if for a
fixed $x_0\in\O$ and any $y\in\O$ there exists a rectifiable
curve given by $\g(\cdot,y)\,:\, [0,1]\to\O$
such that $\g(0,y)=y$ and $\g(1,y)=x_0$, and there exist constants
$\delta$ and $K$, depending only on the domain $\O$ and on $x_0$, such
that
$d(\g(s,y))\ge\delta s$ and
$ |\frac{\partial\g}{\partial s}(s,y)|\le K$.
\end{defi}

\section{Proof of our main theorem}

\begin{theorem}
Let $\O\subset \R^n$ be a domain, $\O\neq \R^n$, $0<s<1$, $\alpha\ge 0$, and $\o \in A_1$. Then 
$$(L^1_\omega(\Omega, d^\alpha), W^{1,1}_{\omega}(\Omega, d^\alpha, d^{\alpha+1}))_{s,1} = \widetilde W^{s,1}_\o(\O, d^{\alpha}, d^{\alpha+s}) = W^{s,1}_\o (\O, d^\alpha, \delta^{\alpha+s}) $$
with equivalence of norms.
\end{theorem}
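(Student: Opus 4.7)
The second equality $\widetilde W^{s,1}_\omega(\Omega, d^\alpha, d^{\alpha+s}) = W^{s,1}_\omega(\Omega, d^\alpha, \delta^{\alpha+s})$ is already furnished by Lemma \ref{equiv-normas}, so the plan is to establish the first equality $(L^1_\omega(\Omega, d^\alpha), W^{1,1}_\omega(\Omega, d^\alpha, d^{\alpha+1}))_{s,1} = \widetilde W^{s,1}_\omega(\Omega, d^\alpha, d^{\alpha+s})$ by proving the two set-theoretic embeddings separately, following the strategy announced in the introduction: adapt \cite{ADD} in the forward direction and \cite{GKS} in the reverse.

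For the forward embedding $\widetilde W^{s,1}_\omega \hookrightarrow (L^1_\omega, W^{1,1}_\omega)_{s,1}$, I would fix a Whitney decomposition $\{Q_j\}$ of $\Omega$ together with a smooth partition of unity $\{\phi_j\}$ subordinate to a mild dilation $\{Q_j^*\}$, and for each $\lambda>0$ construct an explicit decomposition $f = g_\lambda + h_\lambda$ in which $h_\lambda$ replaces $f$ by local weighted averages $f_{Q_j^*}$ on cubes of sidelength $\sim \lambda$ (and leaves $f$ untouched where $d(x)\lesssim \lambda$). The $A_1$ hypothesis enters through the fact that $\omega$ is essentially constant on each Whitney cube (by doubling), so the estimates from \cite{ADD} for the unweighted $\|g_\lambda\|_{L^1(\Omega, d^\alpha)}$ and $\|h_\lambda\|_{W^{1,1}(\Omega, d^\alpha, d^{\alpha+1})}$ in terms of local oscillations of $f$ carry over once the factor $\omega(x)$ is inserted pointwise into each integrand. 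Unpacking $K(f,\lambda)$ via this decomposition and integrating in $\lambda$ against $\lambda^{-s-1}$ then yields, after Fubini, the bound $\|f\|_{(L^1_\omega, W^{1,1}_\omega)_{s,1}}\lesssim |f|_{\widetilde W^{s,1}_\omega}$.

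For the reverse embedding, I would decompose the fractional seminorm dyadically according to $|x-y|\sim 2^{-j}$ and, at each scale, choose an almost optimal splitting $f = g_j + h_j$ with $\|g_j\|_{L^1_\omega(\Omega, d^\alpha)}+2^{-j}\|h_j\|_{W^{1,1}_\omega(\Omega, d^\alpha, d^{\alpha+1})}\le 2K(f, 2^{-j})$. Using $|f(x)-f(y)|\le|g_j(x)|+|g_j(y)|+|h_j(x)-h_j(y)|$, the two $g_j$ terms contribute $\lesssim 2^{js}\|g_j\|_{L^1_\omega(\Omega, d^\alpha)}$ after integrating in $y$ over the annulus $|x-y|\sim 2^{-j}$; for the term involving $g_j(y)$ one swaps the order of integration and uses the comparability $d(x)\sim d(y)$ (valid on $|x-y|<\tau d(x)$) together with the $A_1$ bound $\omega(B(y, 2^{-j}))\lesssim 2^{-jn}\omega(y)$ to return to the $L^1_\omega(\Omega, d^\alpha)$ norm. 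For the $h_j$ term I would use the line-integral bound $|h_j(x)-h_j(y)|\le|x-y|\int_0^1 |\nabla h_j(x+r(y-x))|\,dr$, then change variables via $\xi = x+r(y-x)$, apply Fubini, and once more invoke $A_1$ to convert $\omega(x)d(x)^{\alpha+s}$ integrated over a small ball of radius $\sim r\, 2^{-j}$ about $\xi$ into $\omega(\xi)d(\xi)^{\alpha+s}\cdot (r\, 2^{-j})^n$; on the region $d(\xi)\gtrsim 2^{-j}$ one then writes $d(\xi)^{\alpha+s} = d(\xi)^{\alpha+1}d(\xi)^{s-1}\lesssim 2^{j(1-s)}d(\xi)^{\alpha+1}$ to match the weight of $\|h_j\|_{W^{1,1}_\omega(\Omega, d^\alpha, d^{\alpha+1})}$.

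The main obstacle is ensuring that, when the dyadic contributions are summed, the total is bounded by $\int_0^\infty \lambda^{-s-1}K(f,\lambda)\,d\lambda$: the factors of $2^{j(1-s)}$, $2^{j(s-1)}$, $(r\, 2^{-j})^n$, $r^{-n}$ and $r^{s-1}$ produced by the change of variables, the weight conversion, and the scaling of the $K$-functional must collapse exactly, and this is where the assumption $\omega\in A_1$ (yielding the dimensional control $\omega(B)\lesssim|B|\omega(x)$ for a.e.\ $x\in B$) is essential. A related difficulty, absent in \cite{GKS}, is the mismatch between the weight $d^\alpha$ on the function and $d^{\alpha+1}$ on its gradient, which must be balanced precisely against the geometric restriction $|x-y|<\tau d(x)$ and the weight $d^{\alpha+s}$ that appears in the fractional seminorm.
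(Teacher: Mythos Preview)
Your plan for the second equality (via Lemma~\ref{equiv-normas}) and the high-level split of the first into two embeddings, one adapted from \cite{ADD} and the other from \cite{GKS}, matches the paper. For $\widetilde W^{s,1}_\omega\hookrightarrow(L^1_\omega,W^{1,1}_\omega)_{s,1}$ your Whitney-based construction is essentially the paper's, though note that in \cite{ADD} and here the subcubes have side $\sim\lambda\,\ell_Q$ (scale $\sim\lambda\,d(x)$, \emph{relative} to the boundary distance), not an absolute $\lambda$; this makes the approximation $h^\lambda$ live on all of $\Omega$ and avoids any special treatment of $\{d\lesssim\lambda\}$.

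The reverse embedding, however, has a genuine gap. If you decompose into \emph{absolute} annuli $|x-y|\sim2^{-j}$ and integrate in $y$, the $g_j(x)$ contribution is
\[
2^{js}\int_{\{d(x)\gtrsim 2^{-j}\}}|g_j(x)|\,d(x)^{\alpha+s}\omega(x)\,dx,
\]
which is \emph{not} $\lesssim 2^{js}\|g_j\|_{L^1_\omega(\Omega,d^\alpha)}$: the surplus factor $d(x)^s$ has no upper bound on an unbounded $\Omega$ (and the theorem allows arbitrary $\Omega\neq\R^n$). Your $h_j$ estimate fails in a complementary way: after the line-integral bound and Fubini one arrives at $2^{j(s-1)}\int_{\{d\gtrsim 2^{-j}\}}|\nabla h_j|\,d^{\alpha+s}\omega$, and using $d^{s-1}\lesssim 2^{j(1-s)}$ on that set leaves $\lesssim\|\nabla h_j\|_{L^1_\omega(\Omega,d^{\alpha+1})}\lesssim 2^{j}K(f,2^{-j})$, so the sum over $j$ is controlled only by the $(\cdot,\cdot)_{1,1}$ norm, not $(\cdot,\cdot)_{s,1}$. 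The factors you list do not collapse.

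The paper repairs both problems simultaneously by decomposing at \emph{relative} scales $|x-y|\sim d(x)/2^{i}$. Then $|x-y|^{-s}\sim 2^{is}d(x)^{-s}$ cancels the $d(x)^s$ from the weight, reducing matters to the functional
\[
E(f,\lambda)=\int_\Omega\avgint_{B(x,\lambda d(x))}|f(x)-f(y)|\,dy\;d(x)^{\alpha}\,\omega(x)\,dx,
\]
which carries only $d(x)^\alpha$. For an almost-optimal splitting $f=g_\lambda+h_\lambda$ at parameter $\lambda$ one gets $E(g_\lambda,\lambda)\lesssim\|g_\lambda\|_{L^1_\omega(\Omega,d^\alpha)}$ directly, and for $h_\lambda$ the potential estimate on the ball of radius $\lambda d(x)$ produces exactly a factor $\lambda\,d(x)$, with the $d(x)$ upgrading $d^\alpha$ to $d^{\alpha+1}$ and the $\lambda$ matching the $K$-functional parameter. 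This relative scaling is precisely the mechanism that reconciles the three mismatched weights $d^\alpha$, $d^{\alpha+s}$, $d^{\alpha+1}$, and it is missing from your sketch.
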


\begin{proof} 
The proof follows by Lemma \ref{equiv-normas} and the following two lemmas. 
\end{proof}

\begin{lemma}
Let $\O\subset \R^n$ be a domain, $\O\neq \R^n$, $0<s<1$, $\alpha\ge 0$, and $\o \in A_1$. Then 
$$(L^1_\omega(\Omega), W^{1,1}_{\omega}(\Omega, d^\alpha, d^{\alpha+1}))_{s,1} \subseteq \widetilde W^{s,1}_{\omega}(\Omega, d^{\alpha+s}).$$
\end{lemma}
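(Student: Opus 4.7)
The plan is to bound $|f|_{\widetilde W^{s,1}_\omega(\O, d^{\alpha+s})}$ directly by the equivalent discrete form of the interpolation norm,
\[
\|f\|_{(L^1_\omega(\O, d^\alpha),\, W^{1,1}_\omega(\O, d^\alpha, d^{\alpha+1}))_{s,1}} \ \sim\ \sum_{k\in\mathbb{Z}} 2^{-ks}\,K(f,2^k).
\]
For each $k\in\mathbb{Z}$ I fix a near-optimal decomposition $f=g_k+h_k$ with
\[
\|g_k\|_{L^1_\omega(\O,d^\alpha)} + 2^k\,\|h_k\|_{W^{1,1}_\omega(\O,d^\alpha,d^{\alpha+1})} \ \le\ 2\,K(f,2^k),
\]
and split the region of integration in the seminorm into dyadic shells
\[
A_k \ =\ \bigl\{(x,y)\in\O\times\O \ :\ 2^{k-1}\le|x-y|<2^k,\ |x-y|<\tau d(x)\bigr\}.
\]
On $A_k$ one has $|x-y|\sim 2^k$, $d(x)\sim d(y)\gtrsim 2^k$, and the segment $[x,y]\subset\O$.

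On each $A_k$ I use the decomposition at scale $2^k$ together with the bound
\[
|f(x)-f(y)|\ \le\ |g_k(x)|+|g_k(y)|+|x-y|\int_0^1 |\nabla h_k(x+t(y-x))|\,dt,
\]
the last estimate being the pointwise Sobolev inequality along the segment, which is valid because $[x,y]\subset\O$. Integrating the three resulting terms against $d(x)^{\alpha+s}\omega(x)$ and using Fubini: the $|g_k(y)|$ cross term is reduced to the $|g_k(x)|$ diagonal one through the $A_1$-bound $\int_{B(y,R)}\omega(x)\,dx\lesssim R^n\omega(y)$ together with $d(x)\sim d(y)$ on $A_k$; and the gradient term, after the substitution $z=x+t(y-x)$, is controlled by the $A_1$-potential estimate $\int_{B(z,R)}\omega(x)|z-x|^{1-n}\,dx\lesssim R\,\omega(z)$, exactly the tools already employed in the proof of Lemma~\ref{equiv-normas}. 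These manipulations produce a shell bound
\[
\iint_{A_k}\frac{|f(x)-f(y)|}{|x-y|^{n+s}}\,d(x)^{\alpha+s}\omega(x)\,dy\,dx \ \lesssim\ 2^{-ks}\Bigl(\|g_k\|_{L^1_\omega(\O,d^\alpha)}+2^k\|\nabla h_k\|_{L^1_\omega(\O,d^{\alpha+1})}\Bigr)\ \lesssim\ 2^{-ks}K(f,2^k),
\]
and summation over $k\in\mathbb{Z}$ yields the desired estimate.

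The main technical obstacle is the alignment of the weight $d(x)^{\alpha+s}$ appearing in the seminorm with the endpoint weights $d(x)^\alpha$ and $d(x)^{\alpha+1}$: on $A_k$ the geometric constraint $|x-y|<\tau d(x)$ only gives the lower bound $d(x)\gtrsim 2^k$, not a matching upper bound, so the factor $d(x)^s$ cannot be replaced pointwise by $(2^k)^s$. Handling this requires choosing $(g_k,h_k)$ of the Whitney/Calder\'on type used in \cite{ADD}, localizing $g_k$ near the boundary at scale $2^k$ so that $d(x)\sim 2^k$ on $\operatorname{supp} g_k$. The $\omega$-weighted Fubini steps---absent from \cite{ADD}---are precisely where the $A_1$ condition enters, via the Hedberg-type estimates of Lemma~\ref{equiv-normas}, and this is the modification needed to go from the unweighted case to our setting.
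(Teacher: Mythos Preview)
Your diagnosis of the main obstacle is exactly right, but your proposed cure does not work, and this is where the argument breaks. You begin by fixing an \emph{arbitrary} near-optimal split $f=g_k+h_k$ with $\|g_k\|_{L^1_\omega(\O,d^\alpha)}+2^k\|h_k\|_{W^{1,1}_\omega}\le 2K(f,2^k)$. On the absolute-scale shell $A_k=\{2^{k-1}\le|x-y|<2^k,\ |x-y|<\tau d(x)\}$ you only know $d(x)\gtrsim 2^k$, so the best you get for the $g$-piece is
\[
\iint_{A_k}\frac{|g_k(x)|}{|x-y|^{n+s}}\,d(x)^{\alpha+s}\omega(x)\,dy\,dx\ \lesssim\ 2^{-ks}\int_{\{d(x)\gtrsim 2^k\}}|g_k(x)|\,d(x)^{\alpha+s}\omega(x)\,dx,
\]
with an unwanted $d(x)^s$ that cannot be bounded by $(2^k)^s$. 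You then switch to a Whitney/Calder\'on-type construction with $\operatorname{supp} g_k\subset\{d\sim 2^k\}$. But this is \emph{not} compatible with the near-optimality you already assumed: for a specific construction you would have to \emph{prove} that $\|g_k\|_{L^1_\omega(\O,d^\alpha)}+2^k\|h_k\|_{W^{1,1}_\omega}\lesssim K(f,2^k)$, and that is essentially the content of the opposite embedding (the harder half of the main theorem). In particular, for $x$ far from $\partial\O$ your $h_k$ would coincide with $f$, and there is no reason for $\|\nabla f\|_{L^1_\omega(\O,d^{\alpha+1})}$ to be finite when $f$ is only assumed to lie in the interpolation space. So the shell estimate you write, $\iint_{A_k}\cdots\lesssim 2^{-ks}K(f,2^k)$, has not been established.

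The paper resolves the weight-alignment problem by decomposing in the \emph{relative} scale $\lambda=|x-y|/d(x)$ rather than the absolute scale $|x-y|$. One shows
\[
|f|_{\widetilde W^{s,1}_\omega(\O,d^{\alpha+s})}\ \lesssim\ \int_0^{1/8} E(f,\lambda)\,\frac{d\lambda}{\lambda^{1+s}},\qquad E(f,\lambda)=\int_\O\avgint_{B(x,\lambda d(x))}|f(x)-f(y)|\,dy\,d(x)^\alpha\omega(x)\,dx,
\]
where the extra factor $d(x)^s$ has already been absorbed by $|x-y|^{-s}\sim(\lambda d(x))^{-s}$. Now, for each $\lambda$, an \emph{arbitrary} near-optimal split $f=g_\lambda+h_\lambda$ works: the $g_\lambda$-term gives $\|g_\lambda\|_{L^1_\omega(\O,d^\alpha)}$ directly (using the $A_1$ condition for the cross term, exactly as you indicated), and for $h_\lambda$ one uses the potential bound $|h_\lambda(x)-h_\lambda(y)|\lesssim\int_B|\nabla h_\lambda(z)|\,(|x-z|^{1-n}+|y-z|^{1-n})\,dz$ over $B=B(x,\lambda d(x))$ together with Hedberg's lemma, which produces the factor $\lambda\,d(x)$ and hence $\lambda\|\nabla h_\lambda\|_{L^1_\omega(\O,d^{\alpha+1})}$. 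No support hypothesis on $g_\lambda$ is needed. The missing idea in your sketch is precisely this change of variable from absolute to relative dyadic scale; once you make it, the rest of your outline (Fubini, $A_1$, Hedberg) goes through.
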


\begin{proof}

By Lemma \ref{equiv-normas} we may take   $\tau=\frac1{16}$ and, rewriting the seminorm in a similar fashion as in \cite[Theorem 5.2]{GKS}, we have
\begin{align}
\int_\Omega &\int_{|x-y|<\frac{d(x)}{16}} \frac{|f(y)-f(x)|}{|x-y|^{n+s}} \, dy \, d(x)^{\alpha+s} \omega(x) \, dx \nonumber \\
&= \int_\Omega \sum_{i=4}^\infty \int_{B(x, \frac{d(x)}{2^i}) \setminus B(x, \frac{d(x)}{2^{i+1}})} \frac{|f(y)-f(x)|}{|x-y|^{n+s}} \, dy \, d(x)^{\alpha+s} \omega(x) \, dx \nonumber \\
&\lesssim  \int_\Omega \sum_{i=4}^\infty \Big(\frac{d(x)}{2^i}\Big)^{-(n+s)} \int_{B(x, \frac{d(x)}{2^i}) \setminus B(x, \frac{d(x)}{2^{i+1}})} |f(y)-f(x)| \, dy \, d(x)^{\alpha+s} \omega(x) \, dx \nonumber \\
&\lesssim  \int_\Omega \sum_{i=4}^\infty  \Big(\frac{d(x)}{2^i}\Big)^{-s} \frac{1}{|B(x, \frac{d(x)}{2^i})|} \int_{B(x, \frac{d(x)}{2^i}) } |f(y)-f(x)| \, dy \, d(x)^{\alpha+s} \omega(x) \, dx \nonumber \\
&\lesssim  \int_\Omega \sum_{i=4}^\infty  \, 2^{is} \avgint_{B(x, \frac{d(x)}{2^i}) } |f(y)-f(x)| \, dy \, d(x)^\alpha \omega(x) \, dx \label{eq1}
\end{align}

Observe that 
\begin{align*}
\int_{2^{-i}}^{2^{-i+1}} &\avgint_{B(x, \lambda d(x))} |f(x)-f(y)| \, dy \frac{d\lambda}{\lambda^{1+s}}\\
&\gtrsim \int_{2^{-i}}^{2^{-i+1}} \frac{1}{|B(x, 2^{-i+1} d(x))|}\int_{B(x, 2^{-i} d(x))} |f(x)-f(y)| \, dy \frac{d\lambda}{\lambda^{1+s}}\\
&\gtrsim \int_{2^{-i}}^{2^{-i+1}} \frac{1}{2^{(-i+1)(1+s)}} \frac{1}{|B(x, 2^{-i} d(x))|}\int_{B(x, 2^{-i} d(x))} |f(x)-f(y)| \, dy \, d\lambda\\
&\gtrsim \frac{2^{-i}}{2^{(-i+1)(1+s)}} \, \avgint_{B(x, 2^{-i} d(x))} |f(x)-f(y)| \, dy \\
&\gtrsim 2^{is} \, \avgint_{B(x, 2^{-i} d(x))} |f(x)-f(y)| \, dy 
\end{align*}

So that, plugging this into \eqref{eq1}, we obtain
\begin{align}
\int_\Omega &\int_{|x-y|<\frac{d(x)}{16}} \frac{|f(y)-f(x)|}{|x-y|^{n+s}} \, dy \, d(x)^{\alpha+s} \omega(x) \, dx \nonumber \\
&\lesssim  \int_\Omega \sum_{i=4}^\infty  \, 2^{is} \avgint_{B(x, \frac{d(x)}{2^i}) } |f(y)-f(x)| \, dy \, d(x)^\alpha \omega(x) \, dx \nonumber\\
&\lesssim  \int_\Omega \sum_{i=4}^\infty  \,  \int_{2^{-i}}^{2^{-i+1}} \avgint_{B(x,\lambda d(x))} |f(x)-f(y)| \, dy \, \frac{d\lambda}{\lambda^{1+s}} \, d(x)^\alpha \omega(x) \, dx  \nonumber\\
&\lesssim  \int_\Omega  \int_0^\frac18 \avgint_{B(x,\lambda d(x))} |f(x)-f(y)| \, dy \, \frac{d\lambda }{\lambda^{1+s}} \,  d(x)^\alpha \omega(x) \, dx  \nonumber\\
&= \int_0^\frac18 E(f,\lambda) \frac{d\lambda}{\lambda^{1+s}} \label{cotaE}
\end{align}
with $$ E(f,\lambda) :=\int_\Omega  \avgint_{B(x,\lambda d(x))} |f(x)-f(y)| \, dy  \, d(x)^\alpha \omega(x) \, dx.$$

Now, for each $\lambda \in (0,\frac18)$, pick a decomposition $f=g_\lambda+h_\lambda$, with $g_\lambda \in L^1_\omega(\O, d^\alpha), h_\lambda \in W^{1,1}_{\omega}(\O,d^\alpha,d^{\alpha+1})$, and  $\| d^\alpha g_\lambda\|_{L^1_\omega(\O)} + \lambda \|d^{\alpha+1}\nabla h_\lambda\|_{L^1_{\omega}(\O)}\le 2K(f,\lambda)$, with $K(f,\lambda)$ as in \eqref{K2}. We remark that we may assume that $f\not\equiv 0$ to guarantee that $K(f,\lambda)>0$. Since $E(f,\lambda)\le E(g_\lambda, \lambda)+ E(h_\lambda ,\lambda)$, we may bound these terms separately. 

Observe that, for $x\in \O$, $y\in B(x, \lambda d(x))$ and $\lambda \in (0, \frac18)$,  
$$d(x) \le d(y) + |x-y| < d(y) + \lambda d(x)  \Rightarrow   d(x) < \frac87 d(y)$$
 $$d(y) \le d(x) + |x-y| < d(x) +\lambda d(x) < \frac98 d(x)$$

Therefore, we have that $x \in B(y, \frac87 \lambda d(y))$ and $d(x) \sim d(y)$. Hence, by Fubini,

\begin{align} \label{cotayx}
E(g_\lambda, \lambda) &\lesssim \int_\Omega \frac{1}{(\lambda d(x))^n} \int_{B(x,\lambda d(x))} (|g_\lambda(x)| + |g_\lambda(y)|) \, dy \, d(x)^\alpha \omega(x) \, dx \\
&\lesssim \int_\Omega |g_\lambda(x)| \, d(x)^\alpha \omega(x) \, dx + \int_\Omega  \frac{1}{(\lambda d(x))^n}  \int_{B(x, \lambda d(x))}  |g_\lambda(y)| \, dy \, d(x)^\alpha \omega(x) \, dx\nonumber\\
&\lesssim \int_\Omega |g_\lambda(x)| \, d(x)^\alpha \omega(x) \, dx + \int_\Omega |g_\lambda(y)| \frac{d(y)^\alpha}{(\lambda d(y))^n} \int_{B(y,\frac87 \lambda d(y))} \omega(x) \, dx \, dy\nonumber\\
&\lesssim \int_\Omega |g_\lambda (x)| \, d(x)^\alpha \omega(x) \, dx + \int_\Omega |g_\lambda(y)| \, d(y)^\alpha M\omega(y)  \, dy\nonumber\\
&\lesssim   \| g_\lambda\|_{L^1_\omega(\O, d^\alpha)},\nonumber
\end{align}
where in the last inequality we have used that $M\omega(y) \lesssim \omega(y)$ almost everywhere, because $\omega \in A_1$.

To bound $E(h_\lambda ,\lambda)$, let  $B=B(x, \lambda d(x))$ and $h_{\lambda,B}=\frac{1}{|B|}\int_B h_\lambda(z) \, dz$.  
Then, for any $y\in B$, by \cite[Lemma 7.16]{GT} we may write 
\begin{align*}
|h_\lambda (x)-h_\lambda(y)|&\le |h_\lambda(x)-h_{\lambda,B}| +|h_{\lambda,B}-h_\lambda(y)|\lesssim  \int_{B} \frac{|\nabla h_\lambda(z)|}{|x-z|^{n-1}} \, dz +   \int_{B} \frac{|\nabla h_\lambda(z)|}{|y-z|^{n-1}} \, dz.
\end{align*} So, we obtain 
\begin{align}
E(h_\lambda, \lambda) & =\int_\Omega   \frac{1}{(\lambda d(x))^n}  \int_{B(x,\lambda d(x))} |h_\lambda (x)-h_\lambda(y)| \, dy  \, d(x)^\alpha \omega(x) \, dx\\
&\lesssim \int_\Omega \frac{1}{(\lambda d(x))^n} \int_{B(x,\lambda d(x))} \Big( \int_{B(x,\lambda d(x))} \frac{|\nabla h_\lambda(z)|}{|x-z|^{n-1}} \, dz +  \int_{B(x,\lambda d(x))} \frac{|\nabla h_\lambda(z)|}{|y-z|^{n-1}} \, dz \Big) \, dy \, d(x)^\alpha \omega(x) \, dx\nonumber\\
&= I + II \label{Eh}
\end{align}

If  $z\in B(x,\lambda d(x))$ and $\lambda\in(0,\frac18)$, observe that by the computations right before \eqref{cotayx} (replacing $y$ by $z$), we can deduce that $x\in B(z, \frac87 \lambda d(z))$ and that $d(x)\sim d(z)$. Hence, by Fubini and \cite[Lemma (a)]{He},

\begin{align*}
I &\lesssim \int_\Omega  \int_{B(x, \lambda d(x))} \frac{|\nabla h_\lambda(z)|}{|x-z|^{n-1}} \, dz \, d(x)^\alpha \omega(x) \, dx\\
&\lesssim \int_\Omega \int_{B(z, \frac87 \lambda d(z))} \frac{\omega(x)}{|x-z|^{n-1}} \, dx \, |\nabla h_\lambda(z)| \,  d(z)^\alpha dz\\
&\lesssim \int_\Omega \lambda d(z) M\omega(z) |\nabla h_\lambda(z)| \, d(z)^\alpha dz\\
&\lesssim   \lambda \| \nabla h_\lambda\|_{L^1_ {\omega }(\O,d^{\alpha+1} )}.
\end{align*}

Similarly, to bound $II$, recall that for $y\in B(x, \lambda d(x))$ and $\lambda \in (0, \frac18)$, we have that $x\in B(y,\frac87 \lambda d(y))$ and that $\frac78 d(x) <d(y) <\frac98 d(x)$, so that, if $z\in B(x,\lambda d(x))$,
\begin{equation*}
|z-y|<|z-x|+|x-y|< \lambda d(x) + \frac87 \lambda d(y) < \frac{16}7 \lambda d(y).
\end{equation*}
Therefore, by Fubini, 
\begin{align*}
II &=  \int_\Omega \frac{1}{(\lambda d(x))^n} \int_{B(x,\lambda d(x))}   \int_{B(x,\lambda d(x))}\frac{|\nabla h_\lambda (z)|}{|y-z|^{n-1}} \, dz \, dy \, d(x)^\alpha \omega(x) \, dx\\
&\lesssim  \int_\Omega \frac{1}{(\lambda d(y))^n} \int_{B(y, \frac{16}7 \lambda d(y))}   \int_{B(y, \frac87 \lambda d(y))}  \omega(x) \, dx \, \frac{|\nabla h_\lambda(z)|}{|y-z|^{n-1}} \, dz \, d(y)^\alpha dy \\
&\lesssim  \int_\Omega  \int_{B(y, \frac{16}7 \lambda d(y))}  M\omega(y)  \, \frac{|\nabla h_\lambda (z)|}{|y-z|^{n-1}} \, dz \, d(y)^\alpha dy\\
&\lesssim  \int_\Omega  \int_{B(y, \frac{16}7 \lambda d(y))}  \omega(y)  \, \frac{|\nabla h_\lambda (z)|}{|y-z|^{n-1}} \, dz \, d(y)^\alpha dy.
\end{align*}

Now, observe that for $z \in B(y, \frac{16}7 \lambda d(y))$  and $\lambda \in (0, \frac18)$ we have 
$$d(y)<d(z)+|z-y| <d(z)+\frac{16}7 \lambda d(y)<d(z)+\frac27 d(y) \Rightarrow d(y) <\frac75 d(z)$$
$$d(z)<d(y)+|z-y|<d(y)+ \frac{16}{7}\lambda d(y)<\frac{9}{7} d(y)$$ 
Hence,  $ d(y) \sim d(z)$ and  $|z-y|<\frac{16}7 \lambda d(y) < \frac{16}5 \lambda d(z)$, so by Fubini and \cite[Lemma (a)]{He}, 

\begin{align*}
II &\lesssim  \int_\Omega  \int_{B(z, \frac{16}5 \lambda d(z))} \frac{ \omega(y)  }{|y-z|^{n-1}} \, dy \, |\nabla h_\lambda(z)| \, d(z)^\alpha dz\\
&\lesssim  \int_\Omega  \lambda d(z)  M\omega (z) \, |\nabla h_\lambda(z)| \, d(z)^\alpha dz\\
&\lesssim   \lambda \| \nabla h_\lambda\|_{L^1_{\omega }(\O, d^{\alpha+1})}.
\end{align*}

Finally, we arrive at 
\begin{align*}
\int_\Omega \int_{|x-y|<\frac{d(x)}{16}} \frac{|f(y)-f(x)|^p}{|x-y|^{n+sp}} \, dy \, d(x)^{\alpha+s} \, \omega(x) dx &\lesssim   \int_0^1 \Big( \| g_\lambda\|_{L^1_\omega(\O, d^\alpha)} + \lambda \| \nabla h_\lambda\|_{L^1_\omega(\O, d^{\alpha+1} )} \Big) \frac{d\lambda}{\lambda^{1+s}} \\
&\lesssim   \int_0^1 \lambda^{-s} K(f,\lambda) \, \frac{d\lambda}{\lambda}.
\end{align*}

This completes the proof.
\end{proof}

\begin{lemma}
Let $\O\subset \R^n$ be a domain, $\O\neq \R^n$, $0<s<1$, $\alpha\ge 0$, and $\o \in A_1$. Then 
$$\widetilde W^{s,1}_{\omega}(\Omega, d^{\alpha+s}) \subseteq (L^1_\omega(\Omega), W^{1,1}_{\omega}(\Omega, d^\alpha, d^{\alpha+1}))_{s,1}.$$
\end{lemma}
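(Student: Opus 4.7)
The plan is to construct, for each $\lambda>0$, an explicit admissible decomposition $f=g_\lambda+h_\lambda$ for which
$$\|g_\lambda\|_{L^1_\omega(\Omega,d^\alpha)}+\lambda\,\|h_\lambda\|_{W^{1,1}_\omega(\Omega,d^\alpha,d^{\alpha+1})}\lesssim E(f,\lambda),$$
where
$$E(f,\lambda):=\int_\Omega \avgint_{B(x,c\lambda d(x))}|f(x)-f(y)|\,dy\,d(x)^\alpha\omega(x)\,dx$$
is an averaged first-order difference, and then to recover the target fractional seminorm by integrating $\lambda^{-s-1}E(f,\lambda)$ and applying Fubini.

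I fix a radial $\eta\in C_c^\infty(B(0,1))$ with $\int\eta=1$ and a small $c\in(0,\tau)$. Setting $r(x)=c\lambda d(x)$, the condition $r(x)<d(x)$ holds whenever $\lambda<1/c$, so $B(x,r(x))\subset\Omega$ and the Whitney-type mollification
$$h_\lambda(x):=\int_\Omega f(y)\,\psi_\lambda(x,y)\,dy,\qquad \psi_\lambda(x,y):=r(x)^{-n}\,\eta\!\left(\tfrac{x-y}{r(x)}\right),$$
is well defined on that range. Put $g_\lambda=f-h_\lambda$. Since $\int\psi_\lambda(x,y)\,dy=1$ (hence $\int\nabla_x\psi_\lambda(x,y)\,dy=0$), one has the pointwise identities
$$g_\lambda(x)=\int(f(x)-f(y))\,\psi_\lambda(x,y)\,dy,\qquad \nabla h_\lambda(x)=\int(f(y)-f(x))\,\nabla_x\psi_\lambda(x,y)\,dy,$$
the first of which immediately yields $|g_\lambda(x)|\lesssim\avgint_{B(x,r(x))}|f(x)-f(y)|\,dy$.

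The main delicate point is the bound $|\nabla_x\psi_\lambda(x,y)|\lesssim r(x)^{-n-1}\mathbf{1}_{B(x,r(x))}(y)$. Because the scale $r(x)$ depends on $x$, differentiating $\psi_\lambda$ produces, in addition to the standard terms in $\nabla\eta$ and $\nabla(r^{-n})$, extra contributions involving $\nabla r(x)=c\lambda\,\nabla d(x)$; these are controlled using the a.e.\ Lipschitz bound $|\nabla d|\leq 1$ together with $c\lambda\leq 1$ on the relevant range. Granted this bound, $|\nabla h_\lambda(x)|\lesssim r(x)^{-1}\avgint_{B(x,r(x))}|f(x)-f(y)|\,dy$, and multiplying by $\lambda$ the factor $r(x)^{-1}=(c\lambda d(x))^{-1}$ combines with the weight $d^{\alpha+1}$ to leave $d^\alpha$, producing $\lambda\|\nabla h_\lambda\|_{L^1_\omega(\Omega,d^{\alpha+1})}\lesssim E(f,\lambda)$. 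A Fubini argument using the $A_1$ estimate $M\omega\lesssim\omega$, exactly as in the previous lemma, shows $\|h_\lambda\|_{L^1_\omega(\Omega,d^\alpha)}\lesssim\|f\|_{L^1_\omega(\Omega,d^\alpha)}$, so that $h_\lambda$ indeed lies in $W^{1,1}_\omega(\Omega,d^\alpha,d^{\alpha+1})$. Altogether $K(f,\lambda)\lesssim E(f,\lambda)$ for every $\lambda\in(0,1/c)$.

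To close the argument I split the integral defining the interpolation norm at $\lambda=1/c$. On $[1/c,\infty)$ the trivial decomposition $g_\lambda=f$, $h_\lambda=0$ gives $K(f,\lambda)\leq\|f\|_{L^1_\omega(\Omega,d^\alpha)}$, contributing a finite multiple of $\|f\|_{L^1_\omega(\Omega,d^\alpha)}$. On $(0,1/c)$, inserting $K(f,\lambda)\lesssim E(f,\lambda)$ and exchanging the order of integration,
$$\int_0^{1/c}\!\lambda^{-s-1}E(f,\lambda)\,d\lambda=\int_\Omega\!\int_{|x-y|<d(x)}\!\frac{|f(x)-f(y)|}{d(x)^n}\left(\int_{|x-y|/(c d(x))}^{1/c}\!\!\lambda^{-n-s-1}\,d\lambda\right)dy\,d(x)^\alpha\omega(x)\,dx.$$
The inner $\lambda$-integral is $\lesssim (n+s)^{-1}\bigl(|x-y|/(c d(x))\bigr)^{-n-s}$, so the whole expression is bounded by a constant times $\int_\Omega\int_{|x-y|<d(x)}|f(x)-f(y)|\,|x-y|^{-n-s}\,dy\,d(x)^{\alpha+s}\omega(x)\,dx$, which by Lemma \ref{equiv-normas} is comparable to $|f|_{\widetilde W^{s,1}_\omega(\Omega,d^{\alpha+s})}$, completing the embedding.
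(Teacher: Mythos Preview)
Your approach---a direct variable-scale mollifier $h_\lambda(x)=\int f(y)\,r(x)^{-n}\eta\bigl((x-y)/r(x)\bigr)\,dy$ with $r(x)=c\lambda d(x)$---is a genuinely different and more elementary route than the paper's. The paper builds $h^\lambda$ through a Whitney decomposition refined at scale $\lambda$, a subordinate partition of unity $\{\psi_j^\lambda\}$, and double-mollified constants $f_j^\lambda$, then invokes a pointwise estimate from \cite{ADD}. Your construction avoids this combinatorics entirely; the price is that since $d$ is merely Lipschitz you should either replace it by a regularized distance (so that $h_\lambda$ is honestly $C^1$) or justify weak differentiability more carefully.

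There is, however, a real error in the final step. After Fubini you land on
\[
\int_\Omega\int_{|x-y|<d(x)}\frac{|f(x)-f(y)|}{|x-y|^{n+s}}\,d(x)^{\alpha+s}\omega(x)\,dy\,dx,
\]
corresponding to $\tau=1$, and invoke Lemma~\ref{equiv-normas} to say this is comparable to $|f|_{\widetilde W^{s,1}_\omega}$. That lemma is stated only for $\tau<1$, and the $\tau=1$ quantity is in fact \emph{not} controlled by the $\widetilde W^{s,1}_\omega$ norm. For instance, with $\Omega=(0,\infty)\subset\R$, $\alpha=0$, $\omega\equiv1$ and $f=\chi_{(0,1)}$, one has $\|f\|_{\widetilde W^{s,1}}<\infty$, yet for large $x$ the $\tau=1$ integral picks up $x^s\int_0^1(x-y)^{-1-s}\,dy\sim x^{-1}$, so it diverges. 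The fix is easy: split the $K$-functional integral at some $\lambda_0$ with $c\lambda_0<1$ (e.g.\ $\lambda_0=1$, since you chose $c<\tau<1$); then the Fubini step produces the range $|x-y|<c\lambda_0\,d(x)$, which sits inside the seminorm with parameter $c\lambda_0<1$. This also keeps the implicit constants in ``$d(x)\sim d(y)$'' uniform in $\lambda$, which your choice $\lambda\uparrow 1/c$ does not.

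One minor point: what you actually prove is $K(f,\lambda)\lesssim E(f,\lambda)+\lambda\|f\|_{L^1_\omega(\Omega,d^\alpha)}$, not $K(f,\lambda)\lesssim E(f,\lambda)$, since the $W^{1,1}_\omega$ norm of $h_\lambda$ includes $\|h_\lambda\|_{L^1_\omega(\Omega,d^\alpha)}$. The extra term integrates harmlessly against $\lambda^{-s-1}$ over $(0,\lambda_0)$, but should be written down.
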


\begin{proof}
Observe first that, by  the trivial bound $K(f,\lambda)\le \|f\|_{L^1_\o(\O, d^\alpha)}$, we always have
$$
\int_1^\infty \lambda^{-s} K(f, \lambda) \, \frac{d\lambda}{\lambda} \le \|f\|_{L^1_\o(\Omega, d^\alpha)} \int_1^\infty \lambda^{-s} \, \frac{d\lambda}{\lambda} \lesssim \|f\|_{L^1_\o(\Omega, d^\alpha)}.
$$

Also,  for a given decomposition $f=g+h$ as in \eqref{K2}, 
\begin{align*}
\int_0^1 \lambda^{1-s} \|h\|_{L^1_\o(\Omega, d^\alpha)} \, \frac{d\lambda}{\lambda} & \lesssim  \int_0^1 \lambda^{1-s} \|f\|_{L^1_\o(\Omega, d^\alpha)}  \frac{d\lambda}{\lambda} + \int_0^1 \lambda^{1-s} \|g\|_{L^1_\o(\Omega, d^\alpha)} \frac{d\lambda}{\lambda}\\
&\lesssim    \|f\|_{L^1_\o(\Omega, d^\alpha)} +  \int_0^1 \lambda^{-s} \|g\|_{L^1_\o(\Omega, d^\alpha)} \frac{d\lambda}{\lambda}.
\end{align*}

Therefore,
\begin{equation}
\label{simplificada} \int_0^\infty \lambda^{-s} K(\lambda, f) \, \frac{d\lambda}{\lambda}  \lesssim  \|f\|_{L^1_\o(\Omega, d^\alpha)} +\int_0^1 \lambda^{-s} (\|g\|_{L^1_\o(\Omega, d^\alpha)} + \lambda \|\nabla h\|_{L^1_\o(\Omega, d^{\alpha+1})}) \frac{d\lambda}{\lambda},
\end{equation}
and to prove the claimed embedding it suffices to bound the integral on the right-hand side for specific choices of $g$ and $h$ that we will define below.

As in  \cite[Section 4]{ADD},  given a cube $Q\subset \R^n$, the distance from $Q$ to the boundary of $\Omega$ is denoted by $d(Q,\partial\O)$, while $\mbox{diam}(Q)$ and $\ell_Q$ are the diameter and length of the edges of  $Q$, respectively. 
We pick a Whitney decomposition $\mathcal{W}=\{Q\}$ of $\O$ and,  for every fixed $0<\lambda\le 1$, we build a new dyadic decomposition  $\mathcal{W}^\lambda=\{Q^\lambda\}$   by dividing each $Q \in \mathcal{W}$  in such a way that $\frac12 \lambda \ell_Q\le \ell_{Q^\lambda}\le  \lambda \ell_Q$. Notice that, in particular, this means that $\frac12 \lambda \mbox{diam} (Q)\le  \mbox{diam}({Q^\lambda}) \le  \lambda \mbox{diam} (Q)$.  The center of  $Q^\lambda_j$ in this new partition  is denoted by $x_j^\lambda$,  and we  write $\ell^\lambda_j$ instead of $\ell_{Q^\lambda_j}$. 

For each $\mathcal{W}^\lambda$ we can define the covering of expanded cubes  ${\mathcal{W}^\lambda}^*=\{(Q_j^\lambda)^*\}$ where   $Q^*$  is the cube with the same center as $Q$ but expanded by a factor $9/8$. Observe that it satisfies  $\sum_j \chi_{(Q_j^\lambda)^*}(x) \le C$ for every $x\in \Omega$, and that, for $x \in (Q_j^\lambda)^*$,
\begin{equation}
\label{cotasd}
\frac34 \frac{\mbox{diam}(Q_j^\lambda)}{\lambda} \le d(x) \le \frac{41}4 \frac{\mbox{diam}(Q_j^\lambda)}{\lambda}. 
\end{equation}
 
Associated to this covering we consider a smooth partition of unity  $\{\psi_j^\lambda\}$ such that  $\mbox{supp} (\psi_j^\lambda)\subset (Q_j^\lambda)^*$, $0\le \psi_j^\lambda\le 1$,     $\sum_j \psi_j^\lambda = 1$ in $\Omega$, and $\|\nabla \psi_j^\lambda\|_\infty \le \frac{C}{\ell_j^\lambda}$.

For a given (fixed)  $C^\infty$ function  $\varphi \ge 0$ such that $\mbox{supp}(\varphi) \subset B(0,\frac14)$ and $\int \varphi =1$, and for each $t>0$, we define $\varphi_t(x)=t^{-n} \varphi(t^{-1}x)$. Then, for a given $f \in \widetilde W^{s,1}_\o(\Omega, d^\alpha, d^{\alpha+s})$  we  define
\begin{equation}
\label{ht}
h^\lambda(y)=\sum_j f_j^\lambda \psi_j^\lambda(y),
\end{equation}
 with  
 $$f_j^\lambda = \int_{\mathbb{R}^n} f* \varphi_{\ell^\lambda_j}(z) \varphi_{\ell^\lambda_j} (z-x_j^\lambda) \, dz,$$
 which is a smooth approximation of $f$. Moreover, by \cite[page 9]{ADD}, one has that, for  $y \in (Q_j^\lambda)^*$,
\begin{align*}
|f(y)-f_j^\lambda| &\le \int_0^1 \int_{|x-y|<Ct\ell_j^\lambda} \int_{|x-w|<\frac{d(x)}{2}} |f(x)-f(w)| \chi_{|x-w|<\frac14 t \ell_j^\lambda} \, dw \, \frac{\chi_{(Q_j^\lambda)^*}(x)}{t^{2n+1}} \, dx \, dt \, (\ell_j^\lambda)^{-2n}.
\end{align*}

Since the family ${\mathcal{W}^\lambda}^*$ has finite overlapping, we have that
$$
\|f-h^\lambda\|_{L^1_\o(\Omega, d^\alpha )} \le C \sum_j \| f-f_j^\lambda\|_{L^1_\o((Q_j^\lambda)^*, d^\alpha )}.
$$

Using that, for $x\in (Q_j^\lambda)^*$,  $\ell_j^\lambda \sim \lambda d(x)$ and that $|x-y|<Ct\ell_j^\lambda \Rightarrow d(y)\lesssim d(x)$, we have

\begin{align*}
\int_0^1 & \lambda^{-s} \| f-h^\lambda\|_{L^1_\omega(\Omega, d^\alpha)} \frac{d\lambda}{\lambda} \\
& \lesssim \int_0^1\sum_j \int_{(Q_j^\lambda)^*} \int_0^1 \int_{|x-y|<Ct\ell_j^\lambda} \int_{|x-w|<\frac{d(x)}{2}} |f(x)-f(w)| \,  \chi_{|x-w|<\frac14 t \ell_j^\lambda} \, dw \, \frac{ (\ell_j^\lambda)^{-2n} \lambda^{-s}}{t^{2n+1}} \, dt \, dx \, d(y)^\alpha \omega(y) \, dy \, \frac{d\lambda}{\lambda}\\
& \lesssim \int_0^1\sum_j \int_{(Q_j^\lambda)^*} \int_0^1  \int_{|x-w|<\frac{d(x)}{2}} |f(x)-f(w)| \,  \chi_{|x-w|<\frac14 t \ell_j^\lambda} \, dw \Big( \int_{|x-y|<Ct\ell_j^\lambda} \omega(y) \, dy\Big) \frac{ (\ell_j^\lambda)^{-2n} \lambda^{-s}}{t^{2n+1}} \, dt \, d(x)^\alpha dx \, \frac{d\lambda}{\lambda}\\
& \lesssim \int_0^1\sum_j \int_{(Q_j^\lambda)^*} \int_0^1  \int_{|x-w|<\frac{d(x)}{2}} |f(x)-f(w)| \,  \chi_{|x-w|<\frac14 t \ell_j^\lambda} \, dw (t \ell_j^\lambda)^n M\omega(x) \frac{ (\ell_j^\lambda)^{-2n} \lambda^{-s}}{t^{2n+1}} \, dt \, d(x)^\alpha dx \, \frac{d\lambda}{\lambda}\\
& \lesssim   \int_0^1\sum_j \int_{(Q_j^\lambda)^*} \int_0^1  \int_{|x-w|<\frac{d(x)}{2}} |f(x)-f(w)| \,  \chi_{|x-w|<\frac14 t \ell_j^\lambda} \, dw  \, \omega(x) \frac{ (\ell_j^\lambda)^{-n} \lambda^{-s}}{t^{n+1}} \, dt \, d(x)^\alpha dx \, \frac{d\lambda}{\lambda}\\
&  \lesssim   \int_0^1\sum_j \int_{(Q_j^\lambda)^*} \int_0^1  \int_{|x-w|<\frac{d(x)}{2}} |f(x)-f(w)| \,  \chi_{|x-w|<\frac{c}{4} t \lambda d(x)} \, dw  \, \omega(x) \frac{ d(x)^{\alpha-n} \lambda^{-n-s}}{t^{n+1}} \, dt \, dx \, \frac{d\lambda}{\lambda}\\
& \lesssim   \int_0^1 \int_\Omega \int_0^1  \int_{|x-w|<\frac{d(x)}{2}} |f(x)-f(w)| \,  \chi_{|x-w|<\frac{c}{4} t \lambda d(x)} \, dw  \, \omega(x) \frac{ d(x)^{\alpha-n} \lambda^{-n-s}}{t^{n+1}} \, dt \, dx \, \frac{d\lambda}{\lambda}\\
& \lesssim    \int_\Omega \int_0^1  \int_{|x-w|<\frac{d(x)}{2}} |f(x)-f(w)| \, dw \int_{\frac{4 |x-w|}{c t d(x)}}^\infty \lambda^{-n-s-1} \, d\lambda \,  \frac{ d(x)^{\alpha-n}}{t^{n+1}} \, dt  \, \omega(x) \, dx \\
& \lesssim   \frac{1 }{n+s}  \int_\Omega \int_0^1  \int_{|x-w|<\frac{d(x)}{2}} \frac{|f(x)-f(w)|}{|x-w|^{n+s}} \, dw \, t^{s-1} d(x)^{\alpha+s}    \, dt  \, \omega(x) \, dx \\
& \lesssim  \frac{ 1}{s(n+s)} \int_\Omega \int_{|x-w|<\frac{d(x)}{2}} \frac{|f(x)-f(w)|}{|x-w|^{n+s}} \, dw \,  d(x)^{\alpha+s}  \omega(x) \, dx 
\end{align*}

On the other hand, recalling that  $\mbox{supp} (\psi_j^\lambda)\subset (Q_j^\lambda)^*$, that $\|\nabla \psi_j^\lambda\|_\infty \le \frac{C}{\ell_j^\lambda}$,
and that $\nabla (\sum_j \psi_j^\lambda) =0$,
$$
|\nabla h^\lambda(y)| = \Big| \sum_j f_j^\lambda \nabla \psi_j^\lambda (y) \Big| \lesssim  \sum_j |f_j^\lambda -f(y)|  \frac{1}{\ell_j^\lambda} \chi_{(Q_j^\lambda)^*}(y).
$$
Therefore, 
\begin{align*}
\int_0^1 & \lambda^{1-s} \|\nabla h^\lambda\|_{L^1_{\omega}(\Omega, d^{\alpha+1 })} \frac{d\lambda}{\lambda} \\
& \lesssim \int_0^1 \sum_j \lambda^{1-s} \|(f-f_j^\lambda)(\ell_j^\lambda)^{-1}\|_{L^1_\o((Q_j^\lambda)^*, d^{\alpha+1} )} \frac{d\lambda}{\lambda}\\
& \lesssim \int_0^1 \sum_j \lambda^{1-s} \| (f-f_j^\lambda) \lambda^{-1}\|_{L^1_\omega((Q_j^\lambda)^*, d^\alpha)} \frac{d\lambda}{\lambda}\\
& \lesssim \int_0^1 \sum_j \lambda^{-s} \|(f-f_j^\lambda)\|_{L^1_\omega((Q_j^\lambda)^*, d^\alpha)} \frac{d\lambda}{\lambda}
\end{align*}
so this term can be bounded as before.

Summing up, 
\begin{equation}\label{cota-int}
\int_0^1 \lambda^{-s} \Big( \| f-h^\lambda\|_{L^1_\omega(\O, d^\alpha)} + \lambda \|\nabla h^\lambda \|_{L^1_{\omega }(\O, d^{\alpha+1})} \Big) \frac{d\lambda}{\lambda}  \lesssim \frac{1 }{s(n+s)} \int_\Omega \int_{|x-y|<\frac{d(x)}{2}} \frac{|f(y)-f(x)|}{|x-y|^{n+s}} \, dy \, d(x)^{\alpha+s} \o(x) dx.
\end{equation}

This concludes the proof.
 
\end{proof}

\begin{remark}\label{remark-tau}
It is immediate that inequality \eqref{cota-int} also holds  for every $\frac12<\tau<1$. If one wishes to obtain it for $0<\tau<\frac12$, it suffices to choose $supp(\varphi) \subset B(0, \varepsilon)$ for sufficiently small $\varepsilon$ in the above proof, as the reader can check by following the computations in \cite[page 8]{ADD}.
\end{remark}

\section{Applications to fractional Poincar\'e inequalities}

In the forthcoming results we will make use of two well-known properties of weighted norms contained in the following lemma. We include a proof for the sake of completeness.
\begin{lemma}
Let $\O$ be a bounded domain, $\nu$ a locally integrable nonnegative function,  and  $f_\nu = \frac{1}{\nu(\Omega)}\int_\Omega f(x) \nu(x) \, dx$. Then,
\begin{enumerate}
\item $\inf_{c\in \mathbb{R}} \|f-c\|_{L_\nu^1(\Omega)}  \sim \|f-f_\nu\|_{L^1_\nu(\O)}$.
\item $\|f - f_\nu\|_{L^1_\nu(\O)}\le 2 \|f\|_{L^1_\nu(\O)}$.
\end{enumerate}
\end{lemma}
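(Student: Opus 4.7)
The plan is to prove both items by straightforward triangle-inequality manipulations, using only the defining property of the weighted average $f_\nu$, namely that $\int_\Omega (f(x) - f_\nu) \nu(x)\, dx = 0$.

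For part (1), the upper bound $\inf_{c\in\mathbb{R}} \|f-c\|_{L^1_\nu(\Omega)} \le \|f-f_\nu\|_{L^1_\nu(\Omega)}$ is immediate by taking $c = f_\nu$ as an admissible choice. For the reverse direction, I would fix an arbitrary $c \in \mathbb{R}$ and write
\begin{equation*}
\|f - f_\nu\|_{L^1_\nu(\Omega)} \le \|f - c\|_{L^1_\nu(\Omega)} + \|c - f_\nu\|_{L^1_\nu(\Omega)}.
\end{equation*}
The second term equals $|c - f_\nu|\, \nu(\Omega)$, and using the definition of $f_\nu$ together with Jensen's inequality,
\begin{equation*}
|c - f_\nu|\, \nu(\Omega) = \left| \int_\Omega (c - f(x)) \nu(x)\, dx \right| \le \int_\Omega |f(x) - c| \nu(x)\, dx = \|f - c\|_{L^1_\nu(\Omega)}.
\end{equation*}
Therefore $\|f - f_\nu\|_{L^1_\nu(\Omega)} \le 2\|f - c\|_{L^1_\nu(\Omega)}$, and taking the infimum over $c$ yields the equivalence.

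Part (2) is then an immediate consequence: apply the inequality obtained above with $c = 0$, or equivalently, decompose
\begin{equation*}
\|f - f_\nu\|_{L^1_\nu(\Omega)} \le \|f\|_{L^1_\nu(\Omega)} + |f_\nu| \, \nu(\Omega) \le 2 \|f\|_{L^1_\nu(\Omega)},
\end{equation*}
where the last inequality follows from $|f_\nu|\, \nu(\Omega) \le \int_\Omega |f(x)| \nu(x)\, dx$. No step here is technically difficult; the only thing to watch is that $\nu(\Omega)$ must be finite so the constant $f_\nu$ is well defined and integrable against $\nu$, which is guaranteed by the boundedness of $\Omega$ and local integrability of $\nu$ provided one implicitly assumes $\nu$ is integrable on $\Omega$ (or interprets the statement for $f$ such that $f_\nu$ makes sense).
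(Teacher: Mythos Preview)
Your proof is correct and follows essentially the same route as the paper. Part (1) is identical to the paper's argument; for part (2) the paper writes the double-integral representation $\|f-f_\nu\|_{L^1_\nu(\Omega)} \le \frac{1}{\nu(\Omega)}\int_\Omega\int_\Omega |f(x)-f(y)|\,\nu(x)\nu(y)\,dx\,dy$ before splitting, whereas you observe more directly that it is the case $c=0$ of part (1), but the two computations are the same at the level of inequalities.
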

\begin{proof}
1) It is immediate that  $\inf_{c\in \mathbb{R}} \|f-c\|_{L_\nu^1(\Omega)}  \le \|f-f_\nu\|_{L^1_\nu(\O)}$. For the other inequality, it suffices to observe that, for any $c\in \R$,

\begin{align*}
\|f-f_\nu\|_{L^1_\nu(\O)} & \le \|f-c\|_{L^1_\nu (\O)} + \|c-f_\nu\|_{L^1_\nu (\O)}\\
& \le \|f-c\|_{L^1_\nu (\O)} + \nu(\O) \left| c - \frac{1}{\nu(\O)} \int_\O f(x) \nu(x) \, dx\right| \\
&\le 2 \|f-c\|_{L^1_\nu(\O)}.
\end{align*}

2) Write
\begin{align*}
\|f-f_\nu\|_{L^1_\nu(\O)} &\le \frac{1}{\nu(\Omega)} \int_\Omega \int_\Omega |f(x)-f(y)| \, \nu(x) \nu(y) \, dx \, dy\\
&\le \int_\Omega |f(x)| \nu(x) \, dx + \int_\Omega |f(y)| \nu(y) \, dy = 2 \|f\|_{L^1_\nu(\O)}.
\end{align*}
\end{proof}

\begin{theorem}\label{poincare}
Let  $\Omega$ be a bounded domain, $\alpha \ge 0$,   $\omega\in A_1$ and   $\| \nabla f\|_{L^1_\o(\O, d^{\alpha+1})} <\infty$. Then, the following are equivalent:
\begin{enumerate}
\item $\displaystyle{\inf_{c\in \mathbb{R}} \| f-c \|_{L^1_\o(\Omega, d^\alpha)} \lesssim   \| \nabla f\|_{L^1_\o(\Omega, d^{\alpha+1})}}$
\item  $\displaystyle{\inf_{c\in \mathbb{R}} \| f-c\|_{L^1_\o(\Omega, d^\alpha)} \lesssim   \frac{(1-s)}{s(n+s)} \int_\Omega \int_{|x-y|<\tau d(x)} \frac{|f(x)-f(y)|}{|x-y|^{n+s}} \, d(x)^{\alpha+s} \omega(x) \, dy \, dx}$  for every $0<\tau<1$ and every   ${0<s<1}$
\item  $\displaystyle{\inf_{c\in \mathbb{R}} \|f-c\|_{L^1_\o(\Omega, d^\alpha)} \lesssim   \frac{(1-s)}{s(n+s)}\int_\Omega \int_{|x-y|<\tau d(x)} \frac{|f(x)-f(y)|}{|x-y|^{n+s }} \, d(x)^{\alpha+s} \omega(x) \, dy \, dx}$  for every $0<\tau<1$ and  some ${0<s<1}$
\end{enumerate}
\end{theorem}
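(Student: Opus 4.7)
My plan is to close the equivalence via the cycle $(2)\Rightarrow(3)\Rightarrow(1)\Rightarrow(2)$. Since $(2)\Rightarrow(3)$ is immediate (take any single $s\in(0,1)$), all the work lies in $(3)\Rightarrow(1)$ and $(1)\Rightarrow(2)$, both of which will use the interpolation machinery built in Section 3.

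For $(3)\Rightarrow(1)$, the crucial intermediate step is the auxiliary estimate
$$
(1-s)\,|f|_{\widetilde W^{s,1}_\o(\O,d^{\alpha+s})}\ \lesssim\ \|\nabla f\|_{L^1_\o(\O,d^{\alpha+1})}
$$
for every $f\in W^{1,1}_\o(\O,d^\alpha,d^{\alpha+1})$, which I would obtain directly by a pointwise argument. Starting from $|f(x)-f(y)|\le|x-y|\int_0^1|\nabla f(x+t(y-x))|\,dt$, the change of variables $z=x+t(y-x)$ combined with Fubini reduces the seminorm to $\int_0^1 t^{s-1}\int_\O|\nabla f(z)|\,d(z)^{\alpha+s}\int_{B(z,c\tau t d(z))}|x-z|^{1-n-s}\o(x)\,dx\,dz\,dt$ (using that $d(x)\sim d(z)$ in the relevant region). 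The inner spatial integral is bounded by $\frac{(td(z))^{1-s}}{1-s}M\o(z)$ via the Hedberg-type estimate \cite[Lemma (a)]{He} with exponent $1-s$, and $M\o(z)\lesssim\o(z)$ by the $A_1$ hypothesis; after collecting the powers of $d(z)$ (which combine into $d(z)^{\alpha+1}$), the $t$-integral $\int_0^1 t^{s-1}\cdot t^{1-s}\,dt=1$ leaves a lone $1/(1-s)$ factor coming from Hedberg. Combining this auxiliary bound with hypothesis $(3)$ gives $\inf_c\|f-c\|_{L^1_\o(\O,d^\alpha)}\lesssim\frac{1}{s_0(n+s_0)}\|\nabla f\|_{L^1_\o(\O,d^{\alpha+1})}$, which is $(1)$.

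For $(1)\Rightarrow(2)$, I would exploit the specific decomposition constructed in the proof of the second lemma of Section 3, from which we inherit, for every $0<\lambda<1$, an $h^\lambda\in W^{1,1}_\o$ satisfying the bound \eqref{cota-int}. The triangle inequality followed by $(1)$ applied to $h^\lambda$ gives, for each such $\lambda$,
$$
\inf_{c\in\R}\|f-c\|_{L^1_\o(\O,d^\alpha)}\ \lesssim\ \|f-h^\lambda\|_{L^1_\o(\O,d^\alpha)}+\|\nabla h^\lambda\|_{L^1_\o(\O,d^{\alpha+1})}\ \le\ \lambda^{-1}\bigl[\|f-h^\lambda\|_{L^1_\o(\O,d^\alpha)}+\lambda\|\nabla h^\lambda\|_{L^1_\o(\O,d^{\alpha+1})}\bigr],
$$
the last step using $\lambda\le 1$. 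Multiplying by $\lambda^{1-s}$ and integrating against $d\lambda/\lambda$ over $(0,1)$, the left-hand side produces $\frac{1}{1-s}\inf_c\|f-c\|_{L^1_\o(\O,d^\alpha)}$, while the right-hand side is controlled by $\frac{1}{s(n+s)}|f|_{\widetilde W^{s,1}_\o(\O,d^{\alpha+s})}$ thanks to \eqref{cota-int}. This delivers $(2)$ with the sharp constant $\frac{1-s}{s(n+s)}$.

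The step I expect to be the main obstacle is the auxiliary bound used in $(3)\Rightarrow(1)$: everything hinges on the factor coming out of Hedberg's lemma being of order exactly $1/(1-s)$ (not worse), so that the $(1-s)$ present in hypothesis $(3)$ gets precisely cancelled. The remaining ingredients are routine triangle inequalities, the $A_1$ property $M\o\lesssim\o$, and one-dimensional integration in $\lambda$, all of a type already employed in the proofs of the two lemmas of Section 3.
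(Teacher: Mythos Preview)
Your proposal is correct, and your argument for $(1)\Rightarrow(2)$ is essentially the paper's own: both apply hypothesis $(1)$ to the smooth approximation $h^\lambda$, combine with the triangle inequality to get $\inf_c\|f-c\|_{L^1_\omega(\Omega,d^\alpha)}\lesssim\lambda^{-1}\bigl[\|f-h^\lambda\|_{L^1_\omega(\Omega,d^\alpha)}+\lambda\|\nabla h^\lambda\|_{L^1_\omega(\Omega,d^{\alpha+1})}\bigr]$, multiply by $\lambda^{1-s}$, integrate in $\lambda$ over $(0,1)$, and invoke \eqref{cota-int}.

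Where you diverge from the paper is in $(3)\Rightarrow(1)$. The paper does not redo the pointwise estimate from scratch; instead it recycles the machinery built in Lemma~3.1: the seminorm is first bounded by $\int_0^{1/8}E(f,\lambda)\,\lambda^{-1-s}\,d\lambda$ via \eqref{cotaE}, and then the computations around \eqref{Eh} (done there for $h_\lambda$, now applied verbatim to $f$) give $E(f,\lambda)\lesssim\lambda\|\nabla f\|_{L^1_\omega(\Omega,d^{\alpha+1})}$; the remaining $\int_0^{1/8}\lambda^{-s}\,d\lambda$ produces the $\frac{1}{1-s}$. Your route is more direct: the line-integral bound $|f(x)-f(y)|\le|x-y|\int_0^1|\nabla f(x+t(y-x))|\,dt$, change of variables, Fubini, and Hedberg's lemma with exponent $1-s$ (whose constant $\frac{1}{1-2^{-(1-s)}}\sim\frac{1}{(1-s)\ln 2}$ indeed gives exactly the needed $1/(1-s)$). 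Both approaches ultimately rest on Hedberg plus the $A_1$ condition; the paper's has the virtue of reusing earlier work, while yours is self-contained and avoids the detour through $E(f,\lambda)$ and the Gilbarg--Trudinger potential estimate.
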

\begin{proof}

 $1) \Rightarrow 2)$ This  is a straightforward generalization of an unpublished result by Oscar Dom\'inguez Bonilla for the case $\alpha=0, \o\equiv 1$. Define  $h^\lambda$ as in \eqref{ht} and $\nu= d^\alpha \o$. By hypothesis and the previous lemma,
 \begin{align*}
 \|f- f_\nu\|_{L^1_\nu(\O)} &\lesssim \|f-h^\lambda - ( f_\nu - (h^\lambda)_\nu)\|_{L^1_\nu(\O)}+ \|h^\lambda -  (h^\lambda)_\nu\|_{L^1_\nu(\O)}\\
 &\lesssim \|f-h^\lambda\|_{L^1_\nu(\O)} + \|d \nabla h^\lambda\|_{L^1_\nu(\O)}.
 \end{align*}

Then, for $\lambda\le 1$, 
\begin{align*}
   \lambda\|f- f_\nu\|_{L^1_\nu(\Omega)} &\lesssim \lambda \|f-h^\lambda\|_{L^1_\nu(\Omega)} + \lambda \|d \nabla h^\lambda\|_{L^1_\nu(\Omega)}\\
&\lesssim \| f-h^\lambda\|_{L^1_\o(\Omega, d^\alpha)} + \lambda \|\nabla h^\lambda\|_{L^1_\o(\Omega, d^{\alpha+1} )}.
\end{align*}
Therefore,
$$
\int_0^1 \lambda^{-s+1}  \|f- f_\nu\|_{L^1_\nu(\Omega)} \frac{d\lambda}{\lambda} \lesssim \int_0^1 \lambda^{-s} \Big(\|  f-h^\lambda\|_{L^1_\o(\Omega, d^\alpha)} + \lambda \|  \nabla h^\lambda\|_{L^1_\o(\Omega, d^{\alpha+1})}\Big) \frac{d\lambda}{\lambda}
$$
for every $0<s<1$. Then, by \eqref{cota-int} and Remark \ref{remark-tau},
$$
\frac{1}{(1-s)} \|f- f_\nu\|_{L^1_\nu(\Omega)} \lesssim \int_0^1 \lambda^{-s} \Big( \| f-h^\lambda\|_{L^1_\o(\Omega, d^\alpha)} + \lambda \| \nabla h^\lambda \|_{L^1_\o(\Omega, d^{\alpha+1})}\Big) \frac{d\lambda}{\lambda} \lesssim \frac{1 }{s(n+s)} |f|_{\widetilde W^{s,1}_\o(\O, d^{\alpha+s})}
$$
for  all values of $0<\tau<1$. 

So that, again by the previous lemma,
 $$
\inf_{c\in \R} \| f- c\|_{L^1_\o(\Omega, d^\alpha)}  \lesssim   \frac{(1-s)}{s(n+s)} |f|_{\widetilde W^{s,1}_\o(\O,  d^{\alpha+s})}.
$$

 $2) \Rightarrow 3)$  Trivial.
 
 $3) \Rightarrow 1)$ By \eqref{cotaE}, for $\tau=\frac1{16}$ and repeating for $f$ the computations previoulsy made for $h_\lambda$ in \eqref{Eh}, we get
 $$(1-s) \int_\Omega \int_{|x-y|<\tau d(x)} \frac{|f(y)-f(x)|}{|x-y|^{n+s}} \, dy \, d(x)^{\alpha+s} \omega(x) \, dx \lesssim    \| \nabla f\|_{L^1_{\omega}(\O, d^{\alpha+1})},$$
 and the result follows.
  
  \end{proof}

By the previous theorem one immediately has:

\begin{coro}
Let $\O\subset \R^n$ be a bounded John domain, $\alpha \ge 0$,  $\omega\in A_1$, and  $\|\nabla f\|_{L^1_\o(\O, d^{\alpha+1} )} <\infty$. Then, 
 $$\inf_{c\in \mathbb{R}} \| f-c\|_{L^1_\o(\Omega, d^\alpha)} \lesssim    \frac{(1-s)}{s(n+s)} \int_\Omega \int_{|x-y|<\tau d(x)} \frac{|f(x)-f(y)|}{|x-y|^{n+s}} \, d(x)^{\alpha+s } \omega(x) \, dy \, dx$$
 for every $0<\tau<1$ and every $0<s<1$. 
\end{coro}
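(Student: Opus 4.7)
My plan is to deduce the corollary as an immediate consequence of Theorem \ref{poincare}. The theorem establishes that condition (1), the weighted $L^1$ Poincar\'e inequality for the gradient
$$\inf_{c\in \mathbb{R}} \| f-c \|_{L^1_\o(\Omega, d^\alpha)} \lesssim \| \nabla f\|_{L^1_\o(\Omega, d^{\alpha+1})},$$
is equivalent to condition (2), the fractional inequality with the sharp $(1-s)$ factor for every $0<\tau<1$ and every $0<s<1$. So the only thing left to do is to verify that condition (1) holds in bounded John domains with weight $d^\alpha \o$, where $\o \in A_1$.

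This is a well-documented fact in the literature on weighted Poincar\'e inequalities on John domains: the weight $d(x)^\alpha \o(x)$, being the product of a power of the distance to the boundary and an $A_1$ weight, falls into the class of weights for which the $L^1$--$L^1$ Poincar\'e inequality is known to hold on bounded John domains (see e.g.\ the results of Chua, or Drelichman--Dur\'an, where such inequalities are established precisely by Whitney-type decomposition arguments combined with the $A_p$ theory). Once condition (1) is in hand, the implication $(1)\Rightarrow(2)$ of Theorem \ref{poincare} yields the claimed inequality.

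Thus the structure of the short proof I would write is:
\emph{Step 1.} Cite the known weighted Poincar\'e inequality for the gradient on bounded John domains with weight $d^\alpha \o$ (e.g.\ via Chua's theorem on weighted Poincar\'e inequalities with $A_1$ weights on John domains), which gives hypothesis (1) of Theorem \ref{poincare}. \emph{Step 2.} Invoke the implication $(1)\Rightarrow(2)$ from Theorem \ref{poincare} to conclude.

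The only conceptual obstacle is the bookkeeping in Step 1: one must make sure the cited weighted Poincar\'e inequality applies to the precise weight $d^\alpha \o$ on John domains. Since $\o \in A_1$ and $d^\alpha$ is a standard distance weight for which John-domain Poincar\'e inequalities have been extensively studied, this is a routine verification rather than a genuine obstacle. All the real work has been done in Theorem \ref{poincare}, so the corollary itself requires only a couple of lines.
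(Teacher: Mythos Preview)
Your approach is essentially the same as the paper's: reduce to verifying condition (1) of Theorem \ref{poincare} for bounded John domains, then invoke the implication $(1)\Rightarrow(2)$. The only difference is that the paper, rather than citing condition (1) as a black box, actually sketches the needed modification of \cite[Theorem~3.4]{DD1} (which treats the case $\alpha=0$) via a short duality argument using Hedberg's lemma and the weighted $L^\infty$ bound for the maximal function from \cite{Mu}.
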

\begin{proof}
It suffices to check that
 $$\inf_{c\in \mathbb{R}} \|f-c \|_{L^1_\o(\Omega, d^\alpha )} \lesssim   \| \nabla f\|_{L^1_\o(\Omega, d^{\alpha+1})}.$$
This can be seen with a slight modification of the proof in \cite[Theorem 3.4]{DD1} (which is the case $\alpha=0$), we briefly indicate the necessary steps.

Following that proof, by duality it suffices to bound $\int_\O (f-f_\varphi)(y) g(y) d(y)^\alpha \, dy$ for any $g$ such that $\| \o^{-1} g\|_{L^\infty(\O)}\le 1$. 

As in \cite[equation (3.2)]{DD1} and noting that $|x-y|\le Cd(x) \Rightarrow d(y)\lesssim d(x)$, we have
\begin{align}
\int_\O |(f(y)-f_\varphi) g(y)| d(y)^\alpha  \, dy &\lesssim \int_\O \int_{|x-y|\le C d(x)} \frac{|g(y)| \chi_\O(y)}{|x-y|^{n-1}} \, dy |\nabla f(x)|  d(x)^\alpha \, dx \nonumber \\
&\lesssim \int_\O M(\chi_\O g)(x) d^{\alpha+1}(x) |\nabla f(x)| \, dx \label{cotahedberg} \\
&\lesssim \|\o^{-1} M(\chi_\O g)  \|_{L^\infty(\O)} \|\o \, d^{\alpha+1} \nabla f \|_{L^1(\O)} \nonumber \\
&\lesssim   \|\o^{-1} g\|_{L^\infty(\O)} \|  \nabla f\|_{L^1_\o(\O, d^{\alpha+1})} \label{cotamaximal}\\
&\lesssim   \|  \nabla f\|_{L^1_\o(\O, d^{\alpha+1})} \nonumber
\end{align}
 where in \eqref{cotahedberg} we have used  \cite[Lemma (a)]{He}, and in \eqref{cotamaximal} we have used \cite[Theorem 4]{Mu} (actually, the remark at the end of  \cite[Section 7]{Mu} regarding its extension to the $n$-dimensional case).
\end{proof}

\end{document}